\RequirePackage{ifpdf}
\ifpdf 
\documentclass[pdftex]{sigma}
\else
\documentclass{sigma}
\fi




\numberwithin{equation}{section}

\newtheorem*{Theorem*}{Theorem}

 { \theoremstyle{definition}

 }

\begin{document}

\renewcommand{\PaperNumber}{***}

\FirstPageHeading

\ShortArticleName{Mixed Type Additive-Cubic Jensen Functional Equation}

\ArticleName{Stability of Mixed Type Additive-Cubic Jensen Functional Equation in Non-Archimedean $(n, \beta)$ Normed Spaces}

\Author{Koushika Dhevi Sankar~$^{\rm a}$ and Sangeetha Sampath~$^{\rm a*}$}

\AuthorNameForHeading{K.D.~Sankar and S.~Sampath}

\Address{$^{\rm a)}$~Department of Mathematics, College of Engineering and Technology, SRM Institute of Science and Technology, Kattankulathur, India-603203.} 
$^*$Corresponding author\EmailD{\href{mailto:email@address}{sangeets@srmist.edu.in}} 



\ArticleDates{Received ???, in final form ????; Published online ????}

\Abstract{In this paper, we discuss the Hyers-Ulam stability of mixed-type additive-cubic Jensen functional equation 
\begin{align*}
2\mathcal{F}\left(\frac{2u+v}{2}\right)+2\mathcal{F}\left(\frac{2u-v}{2}\right)=\frac{1}{4}[\mathcal{F}(u+v)+\mathcal{F}(u-v)]+3\mathcal{F}(u)
\end{align*}
in non-Archimedean $(n, \beta)$ normed spaces.}

\Keywords{Hyers-Ulam stability; additive-cubic Jensen mapping; non-Archimedean $(n, \beta)$ normed spaces}

\Classification{39B82; 39B72; 12J25} 

\section{Introduction}
The stability of the functional equation defines the nature of a function that satisfies the equation when the function is subject to slight variation. The stability investigation for the functional equation begins with Ulam in 1940 \cite{SU}. His query is
``Given a group $(K,*)$, a metric group $(K',.,d)$ with the metric $d(.,.)$ and a mapping $g:K \to K'$ does there exist $\delta > 0$ such that if
\begin{align*}
d(g(x*y),g(x).g(y)) \leq \delta 
\end{align*}
for all $x, y \in K$, then there is a homomorphism $h:K \to K'$ such that
\begin{align*}
d(g(x),h(x)) \leq \epsilon
\end{align*}
for all $x \in K$?".\\

In 1941, Hyers \cite{DH} gave the first response on the Banach spaces, and his method is called the direct method that has been used to explore the stability of functional equations. He provided a great solution to  Ulam's classic question about the stability of functional equations in Banach spaces. Later, Xu et al.\cite{TW} obtained the stability of a general mixed additive-cubic functional equation in non-Archimedean fuzzy normed spaces in 2010.\\

In 2012, Ebadian and Zolfaghari \cite{AE} discussed the stability of mixed additive-cubic functional equation with several variables in non-Archimedean spaces and Xu \cite{TZ} proved the stability problem of multi-Jensen mappings in non-Archimedean normed space. Stability of mixed additive-quadratic Jensen type functional equation in non-Archimedean fuzzy normed spaces given by Abolfathi and Rasoul Aghalary \cite{AB} in 2014. Thereafter, Ji et al.\cite{PS} investigated the Hyers-Ulam stability of the Jensen-cubic functional equation in real vector spaces.\\

 In 2019, Liu et al.\cite{YL} studied the stability of an AQCQ functional equation in non-Archimedean $(n, \beta)$ normed spaces. Recently, Ramachandran and Sangeetha \cite{RA} investigated the stability of Jensen-type cubic and quartic functional equations over non-Archimedean normed space in 2024.\\
At present, in this paper, we study the Hyers-Ulam stability of the mixed-type additive-cubic Jensen functional equation 
\begin{align}\label{eqn1}
\begin{split}
2\mathcal{F}\left(\frac{2u+v}{2}\right)+2\mathcal{F}\left(\frac{2u-v}{2}\right)
=\frac{1}{4}[\mathcal{F}(u+v)+\mathcal{F}(u-v)]+3\mathcal{F}(u)
\end{split}
\end{align}
in non-Archimedean $(n, \beta)$ normed spaces.

\section{Preliminaries}

\begin{definition} \cite{GB,VA}
Let $|.|:\mathbb{K} \rightarrow \mathbb{R}$  be a function said to non-Archimedean valuation for any field $\mathbb{K}$, satisfies the following conditions:\\
		(i) $|r| \geq 0$ when $r \neq 0$\\
		(ii) $|r|=0$, \quad when \quad $r=0$\\
		(iii) $|rs|=|r| |s|$\\
		(iv)$|r+s| \leq max\{|r|, |s|\}$.
	\end{definition}

	\begin{definition} \cite{GB}
Consider $p$ be a prime and $y$ be a rational number, 
which can be written as $y=p^\gamma.\frac{e}{f}$, where $e, f, \gamma$ are integers in such a way that $e$ and $f$ are not divisible by $p$. Then, $p$-adic valuation can be defined as
\begin{align*}
|y|_{p}=\frac{1}{p^{\gamma}} \quad if \quad u\neq 0\\
|y|_p=0 \quad if\quad u=0.
\end{align*}
	\end{definition}

\begin{example}
Consider $y=\frac{1450}{7}$,  \begin{align*}
	y=25. \frac{58}{7}=5^2.\frac{58}{7} \end{align*} which means $|y|_5 =\frac{1}{5^2}$
\end{example}


	\begin{definition} \cite{GB, VA}
Let a function $\|.\|: X\rightarrow \mathbb{R}$ is called a non-Archimedean norm if it satisfies the following conditions:\\
		(i)  $ \|r\|=0$ iff $r=0$ for all $r\in X$\\
		(ii) $ \|\alpha r\|=\ |\alpha\ |\|r\|$ for all $r$ $\in$ X and $\alpha \in \mathbb{K}$\\
		(iii) $ \|r+s\| \leq max \{\|r\|,\|s\|\}$ for all $r,s$ $\in X$
		where $X$ be a vector space over a field $\mathbb{K}$.
	\end{definition}

	\begin{definition} \cite{YL}
For a real vector space X with dim$X \geq n$ (a positive integer) over the field $\mathbb{K}$ and a constant $\beta$, $0 < \beta \leq 1$, the function $\|·, . . . , ·\|_\beta: X_n \to \mathbb{R}$ is said to be a $(n, \beta)$-norm, if its satisfies the following condition:\\
		(i) $\|a_1, a_2, . . . , a_n\|_\beta = 0$ if and only if $a_1, . . . , a_n$ are linearly dependent;\\
		(ii) $\|a_1,a_2 . . . , a_n\|_\beta$ is invariant under permutations of $a_1, a_2. . . , a_n$;\\
		(iii) $\|\gamma a_1, a_2, . . . , a_n\|_\beta = |\gamma|^\beta \|a_1, a_2, . . . , a_n\|_\beta$;\\
		(iv) $\|a_0 + a_1, a_2, . . . , a_n\|_\beta$
$\leq max\{\|a_0, a_2, . . . , a_n\|_\beta, \|a_1, a_2, . . . , a_n\|_\beta\}$;
		for all $\gamma \in K$ and $a_0, a_1, . . . , a_n \in X$. Also, $(X, \|·, . . . , ·\|_\beta)$ is called a non Archimedean $(n, \beta)$ normed space.
	\end{definition}

	\begin{definition} \cite{YL}
		A sequence $\{x_m\}$ in a non Archimedean $(n, \beta)$ normed space $X$ is a Cauchy sequence if and only if $\{x_{m+1} - x_m\}$ converges to zero.
	\end{definition}
	
	Throughout this paper, let $S$ and $T$ be non-Archimedean normed spaces and complete non-Archimedean $(n, \beta)$ normed spaces respectively.\\
	Let 
	\begin{align}\label{eqn1}
\begin{split}
		D_{AC}(u,v)=2\mathcal{F}\left(\frac{2u+v}{2}\right)+2\mathcal{F}\left(\frac{2u-v}{2}\right)-\frac{1}{4}[\mathcal{F}(u+v)+\mathcal{F}(u-v)]-3\mathcal{F}(u).
\end{split}
	\end{align}
\section{Main results}
\begin{theorem} \label{result1}
     Let $\sigma : S\times S \to [0, \infty)$ be a function such that
		\begin{align}\label{eqn2}
			\lim_{j\to\infty}|2|^{j\beta}\overline\sigma\left(\frac{u}{2^{j+1}}\right)=0 
		\end{align}
		\begin{align}\label{eqn3}
			\lim_{j\to\infty}|2|^{j\beta}\sigma\left(\frac{u}{2^{j}},\frac{v}{2^j}\right)=0 
		\end{align}
	for each $u, v\in S$, 	and let $\xi : T^{n-1} \to [0,\infty)$ be a function then
		\begin{align}\label{eqn4}
			\lim_{j\to\infty} max\Bigg\{{|2|^{l\beta}} \overline\sigma\left(\frac{u}{2^{l+1}}\right) : 0 \leq l < j\Bigg\}
		\end{align}
	for each $u\in S$, denoted by $\hat\sigma_A(u)$ exists. Suppose that $\mathcal{F}: S \to T$ is a mapping satisfying and $\mathcal{F}(0)=0$ such that
		\begin{align}\label{eqn5}
\begin{split}
			\|D_{AC}(u,v), w_1, w_2,...,w_{n-1}\|_\beta \leq \sigma(u,v)\xi(w_1, w_2,...,w_{n-1})
\end{split}
		\end{align}
		then there is an additive mapping $A:S \to T$ so that 
		\begin{align}\label{eqn6}
\begin{split}
			\|\mathcal{F}(2u)-8\mathcal{F}(u)-A(u), w_1, w_2,...,w_{n-1}\|_\beta \leq \frac{1}{|2|^\beta}\hat\sigma_A(u)\psi(w_1, w_2,...,w_{n-1})
\end{split}
		\end{align}
for each $w_1, w_2,...,w_{n-1} \in T$.\\
		Moreover, if
		\begin{align*}
			\lim_{m\to\infty}\lim_{j\to\infty}max\Bigg\{{|2|^{(l+1)\beta}}\overline\sigma\left(\frac{u}{2^{l+1}}\right) : m \leq l < j+m\Bigg\}=0
		\end{align*}
		then $A$ is unique.
	\end{theorem}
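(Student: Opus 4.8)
The plan is to follow the direct method, isolating the additive part of $\mathcal{F}$ through the auxiliary map $g(u) := \mathcal{F}(2u) - 8\mathcal{F}(u)$. Since a purely cubic map $C$ satisfies $C(2u) = 8C(u)$ while an additive map $A$ satisfies $A(2u) = 2A(u)$, the combination $g$ is engineered to annihilate the cubic component and leave an approximately additive object; this is precisely why the target in \eqref{eqn6} is phrased through $\mathcal{F}(2u) - 8\mathcal{F}(u)$. First I would feed carefully chosen dyadic substitutions of $(u,v)$ into the basic estimate \eqref{eqn5}, combining the resulting inequalities by means of the strong triangle inequality \textit{(iv)} of the $(n,\beta)$-norm and the homogeneity axiom \textit{(iii)}. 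The outcome I am aiming for is the consecutive-difference estimate
\begin{align*}
\left\| 2^{l+1} g\!\left(\tfrac{u}{2^{l+1}}\right) - 2^{l} g\!\left(\tfrac{u}{2^{l}}\right),\, w_1,\dots,w_{n-1}\right\|_\beta \le |2|^{l\beta}\,\overline\sigma\!\left(\tfrac{u}{2^{l+1}}\right)\psi(w_1,\dots,w_{n-1}),
\end{align*}
where $\overline\sigma$ is the explicit combination of values of $\sigma$ produced by the substitutions. Pinning down this combination and verifying that the powers of $|2|^{\beta}$ track correctly through the rescalings is the first technical hurdle.

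Next I would set $A_j(u) := 2^{j} g(u/2^{j})$ and show that $\{A_j(u)\}$ is Cauchy. By the non-Archimedean Cauchy criterion this reduces to proving that the consecutive differences tend to zero, which is exactly the left-hand side above; hypothesis \eqref{eqn2} forces $|2|^{j\beta}\overline\sigma(u/2^{j+1}) \to 0$, so convergence follows, and completeness of $T$ lets me define $A(u) := \lim_{j\to\infty} A_j(u)$. Telescoping the estimate — using that in the non-Archimedean setting a sum of terms is dominated by their \emph{maximum} rather than their sum — yields, upon letting $j\to\infty$, the bound \eqref{eqn6}, with the $\max$ defining $\hat\sigma_A(u)$ read off from \eqref{eqn4} and the normalization $1/|2|^{\beta}$ absorbed from the precise one-step constant. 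Handling this $\max$-telescoping cleanly, rather than the Archimedean series telescoping, is the second delicate point.

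I would then verify that the limit $A$ is genuinely additive. Replacing $(u,v)$ by $(u/2^{j}, v/2^{j})$ in \eqref{eqn5}, multiplying through by $|2|^{j\beta}$, and invoking hypothesis \eqref{eqn3} shows that the functional-equation defect of $A$ associated with \eqref{eqn1} vanishes in the limit; together with $A(0)=0$ (inherited from $\mathcal{F}(0)=0$) and the Jensen structure of the equation, this pins $A$ down as an additive mapping. The main obstacle here is organizational: the three limiting hypotheses \eqref{eqn2}, \eqref{eqn3}, and \eqref{eqn4} must each be used for their intended role — \eqref{eqn2} for Cauchyness, \eqref{eqn4} for the explicit bound, and \eqref{eqn3} for additivity — without conflating them.

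Finally, for uniqueness I would suppose $A'$ is another additive map obeying \eqref{eqn6}. Using additivity to write $A(u) - A'(u) = 2^{m}\bigl(A(u/2^{m}) - A'(u/2^{m})\bigr)$ and inserting the two instances of \eqref{eqn6} at the rescaled argument $u/2^m$, the strong triangle inequality bounds $\|A(u)-A'(u),w_1,\dots,w_{n-1}\|_\beta$ by a shifted iterated $\max$ of the quantities $|2|^{(l+1)\beta}\overline\sigma(u/2^{l+1})$. The supplementary hypothesis that this iterated $\max$ tends to zero then forces $A = A'$, completing the argument.
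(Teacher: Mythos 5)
Your proposal is correct and follows essentially the same route as the paper: the auxiliary map $g(u)=\mathcal{F}(2u)-8\mathcal{F}(u)$ is exactly the paper's $K$, the Cauchy sequence $2^{j}g(u/2^{j})$, the $\max$-telescoping bound, the additivity argument via \eqref{eqn3}, and the uniqueness argument via the iterated $\max$ hypothesis all match the paper's proof. The only cosmetic difference is that you state the dyadic substitutions abstractly, whereas the paper makes them explicit (namely $(u,v)\mapsto(2u,2v)$ and $(u,v)\mapsto(u,2u)$, yielding $\overline\sigma(u)=\max\{8\sigma(u,2u),\,2\sigma(2u,2u)\}$).
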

\begin{proof}
Replacing $(u, v)$ as $(2u, 2v)$ in \eqref{eqn5}, we have
\begin{align}\label{eqn7}
\begin{split}
\|4\mathcal{F}(3u)+4\mathcal{F}(u)-\mathcal{F}(4u)-6\mathcal{F}(2u),w_1,w_2,..,w_{n-1}\|_\beta \leq 2\sigma(2u, 2v)\psi(w_1, w_2,...,w_{n-1}).
\end{split}
\end{align}
Replacing $(u, v)$ as $(u, 2u)$ in  \eqref{eqn5}, we have
\begin{small}
\begin{align}\label{eqn8}
\|16\mathcal{F}(2u)-4\mathcal{F}(3u)-20\mathcal{F}(u), w_1, w_2,...,w_{n-1}\|_\beta \leq 8\sigma(u, 2u)\psi(w_1, w_2,...,w_{n-1}).
\end{align} 
\end{small}
From  \eqref{eqn7} and  \eqref{eqn8}, we have
\begin{align}\label{eqn9}
\|\mathcal{F}(4u)-10\mathcal{F}(2u)+16\mathcal{F}(u), w_1, w_2,...,w_{n-1}\|_\beta \leq max\{8&\sigma(u, 2u), 2\sigma(2u, 2v)\}\nonumber\\&\psi(w_1, w_2,...,w_{n-1}).
\end{align}
Let $K: S\to T$ be a function defined by $K(u)=\mathcal{F}(2u)-8\mathcal{F}(u)$, we get
\begin{align}\label{eqn10}
\|K(2u)-2K(u), w_1, w_2,...,w_{n-1}\|_\beta\leq \overline\sigma(u)\psi(w_1, w_2,...,w_{n-1})
\end{align}
where $\overline\sigma(u)=max\{8\sigma(u, 2u), 2\sigma(2u, 2u)\}$.\\
Replacing $u$ by $\frac{u}{2^{j+1}}$ and multiplying $2^j$ on both sides in  \eqref{eqn10}, we have
\begin{align}\label{eqn11}
\|2^{j+1}K\left(\frac{u}{2^{j+1}}\right)-2^jK\left(\frac{u}{2^j}\right), w_1, w_2,...,w_{n-1}\|_\beta \leq 2^j\overline\sigma\left(\frac{u}{2^{j+1}}\right)\psi(w_1, w_2,...,w_{n-1}).
\end{align}
Hence, $\{2^jK\left(\frac{u}{2^j}\right)\}$ is a Cauchy sequence.\\
Define,
\begin{align}\label{eqn12}
A=\lim_{j\to\infty}2^jK\left(\frac{u}{2^j}\right).
\end{align}
By induction,
\begin{align}\label{eqn13}
\|2^jK\left(\frac{u}{2^j}\right)-K(u), w_1, w_2,...,w_{n-1}\|_\beta \leq \frac{1}{|2|}max\left\{|2|^{l+1}\overline\sigma\left(\frac{u}{2^{l+1}}\right): 0\leq l <j \right\}\psi(w_1,w_2,...,w_{n-1}).
\end{align}
By taking limit $j\to\infty$ in \eqref{eqn13}, we get  \eqref{eqn6}\\
To prove $A$ is additive,
\begin{align*}
\|A(2u)-2A(u), w_1,w_2,...,w_{n-1}\|_\beta \leq |2|\lim_{j \to \infty}\|2^{j-1}K\left(\frac{u}{2^{j-1}}\right)-2^jK\left(\frac{u}{2^j}\right)\|\psi(w_1,w_2,...,w_{n-1}).
\end{align*}
Replacing $u$ and $v$ by $2^ju$ and $2^jv$ in  \eqref{eqn5}, we have
\begin{align*}
\|D_{A}(u,v),w_1,w_2,...,w_{n-1}\|_\beta \leq \lim_{j\to \infty}|2|^j max\left\{\sigma\left(\frac{u}{2^{j-1}}, \frac{v}{2^{j-1}}\right), |8|\sigma\left(\frac{u}{2^j}, \frac{v}{2^j}\right)\right\}\psi(w_1,w_2,...,w_{n-1})
\end{align*}
Hence $A$ satisfies \eqref{eqn1}.
Let $A'$ be another additive function\\
\begin{align*}
\|A(u)-A'(u), w_1,w_2,...,w_{n-1}\|_\beta \leq \frac{1}{|2|}\lim_{m\to\infty}\lim_{j\to\infty}max\{&|2|^{b+1}\overline\sigma\left(\frac{u}{2^{b+1}}\right): m\leq l <j+m\}\nonumber\\&\psi(w_1,w_2,...,w_{n-1}).
\end{align*}
Hence the proof is complete.
\end{proof}

\begin{corollary}
Let $\rho, x, y \in \mathbb{R}^{+} \cup \{0\}$ and $x+y>1$. If a mapping $\mathcal{F}: S\to T $ is an mapping satisfying $\mathcal{F}(0)=0$ and 
\begin{align*}
\|D_{AC}(u,v), w_1, w_2,...,w_{n-1}\|_\beta \leq \rho(\|u\|^{x+y}+\|v\|^{x+y}+\|u\|^{x}\|v\|^{y})\psi(w_1, w_2,...,w_{n-1})
\end{align*}
then there is a unique additive mapping $A: S\to T$ so that 
\begin{align*}
\|\mathcal{F}(2u)-8\mathcal{F}(u)&-A(u), w_1, w_2,...,w_{n-1}\|_\beta \leq \frac{1}{|2|^\beta}\hat\sigma_A(u)\psi(w_1,w_2,...,w_{n-1})
\end{align*}
where,
\begin{align*}
\begin{split}
\hat\sigma_A(u)=\lim_{j\to\infty}max\{|2|^{l\beta}\overline\sigma\left(\frac{u}{2^{l+1}}\right): 0\leq l <j\}\\
\overline\sigma(u)=4\|u\|^{x+y}max\{2(1+|2|^{x+y}+|2|^y), 2+|2|^{x+y}\}
\end{split}
\end{align*}
\end{corollary}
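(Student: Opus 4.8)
The plan is to obtain the corollary as a direct specialization of Theorem~\ref{result1}. First I would set
\begin{align*}
\sigma(u,v)=\rho\left(\|u\|^{x+y}+\|v\|^{x+y}+\|u\|^{x}\|v\|^{y}\right),
\end{align*}
and keep $\psi$ as the factor on $T^{n-1}$, so that the estimate assumed in the corollary becomes literally the hypothesis \eqref{eqn5}. Everything then reduces to two bookkeeping tasks: evaluating the theorem's auxiliary function $\overline\sigma$ in closed form, and checking that this $\sigma$ obeys the limit conditions \eqref{eqn2}, \eqref{eqn3}, \eqref{eqn4} together with the uniqueness condition.

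For the closed form I would use the homogeneity $\|2u\|=|2|\,\|u\|$ of the non-Archimedean norm on $S$. This gives $\sigma(u,2u)=\rho\|u\|^{x+y}\left(1+|2|^{x+y}+|2|^{y}\right)$ and, in the same way, an expression for $\sigma(2u,2u)$ that is again a constant multiple of $\|u\|^{x+y}$. Substituting into $\overline\sigma(u)=\max\{8\sigma(u,2u),\,2\sigma(2u,2u)\}$ and factoring the common $4\rho\|u\|^{x+y}$ out of each entry of the maximum reproduces exactly the expression
\begin{align*}
\overline\sigma(u)=4\rho\|u\|^{x+y}\max\left\{2\left(1+|2|^{x+y}+|2|^{y}\right),\,2+|2|^{x+y}\right\}
\end{align*}
quoted in the statement, and $\hat\sigma_A(u)$ is then read off from \eqref{eqn4}.

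To discharge the limit conditions I would again use homogeneity, now in the form $\|u/2^{j}\|=|2|^{-j}\|u\|$, to record the scalings
\begin{align*}
\overline\sigma\left(\frac{u}{2^{j+1}}\right)=|2|^{-(j+1)(x+y)}\,\overline\sigma(u),\qquad \sigma\left(\frac{u}{2^{j}},\frac{v}{2^{j}}\right)=|2|^{-j(x+y)}\,\sigma(u,v).
\end{align*}
Feeding these into \eqref{eqn2} and \eqref{eqn3} collapses each expression to a constant multiple of $|2|^{\,j(\beta-(x+y))}$, so that verifying the three limits, the existence of the maximum defining $\hat\sigma_A$, and the double-limit uniqueness condition all amount to showing that this single geometric factor tends to $0$. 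Once those vanishings are confirmed, Theorem~\ref{result1} applies verbatim and produces a unique additive mapping $A:S\to T$ obeying the bound \eqref{eqn6}.

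The step I expect to be the real obstacle is precisely this last convergence check. The required decay in \eqref{eqn2}--\eqref{eqn4} is equivalent to the single numerical inequality $|2|^{\,\beta-(x+y)}<1$, and since a non-Archimedean absolute value always satisfies $|2|\le 1$, its validity is controlled entirely by the sign of the exponent $\beta-(x+y)$. Pinning down that sign, and thereby isolating the exact relation between $x+y$ and $\beta$ (with $0<\beta\le1$) under which the factor $|2|^{\,j(\beta-(x+y))}$ genuinely goes to zero, is the one genuinely delicate point; every remaining step is routine substitution into Theorem~\ref{result1}.
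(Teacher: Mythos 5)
Your reduction is set up correctly, but the step you defer as ``the one genuinely delicate point'' is not delicate at all --- it is decidable immediately, and it decides \emph{against} the proposal. You correctly collapse all of the conditions \eqref{eqn2}, \eqref{eqn3}, \eqref{eqn4} and the uniqueness condition to the single requirement $|2|^{\beta-(x+y)}<1$, i.e.\ to the sign of $\beta-(x+y)$. But under the corollary's hypotheses that sign is forced: $x+y>1$ and $0<\beta\leq 1$ give $\beta-(x+y)<0$. Since in any non-Archimedean field $|2|=|1+1|\leq\max\{|1|,|1|\}=1$, a negative exponent yields $|2|^{\,j(\beta-(x+y))}\geq 1$ for all $j$: if $|2|=1$ (e.g.\ $\mathbb{Q}_p$ with $p$ odd) the factor is identically $1$, and if $|2|<1$ (e.g.\ $\mathbb{Q}_2$) it diverges. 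In either case
\begin{align*}
\lim_{j\to\infty}|2|^{j\beta}\,\overline\sigma\left(\frac{u}{2^{j+1}}\right)\neq 0
\qquad\text{for every }u\neq 0,
\end{align*}
so the hypotheses of Theorem~\ref{result1} are violated and the theorem cannot be applied ``verbatim'' as you claim. Your proof therefore has a genuine, unfixable gap as written: the specialization $\sigma(u,v)=\rho(\|u\|^{x+y}+\|v\|^{x+y}+\|u\|^{x}\|v\|^{y})$ with $x+y>1$ simply does not satisfy the theorem. (This is consistent with the paper itself, which states the corollary without proof and then immediately exhibits, for $|2|_p=1$, a map for which $\{2^jK(u/2^j)\}$ fails to be Cauchy --- the same obstruction you would hit.) To rescue the statement one would need either the opposite hypothesis $x+y<\beta$, or a theorem built on the reverse iteration $A(u)=\lim_j 2^{-j}K(2^ju)$, which converges precisely when $|2|<1$ and $x+y>\beta$; neither is Theorem~\ref{result1}.

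A secondary point: your claim that the substitution ``reproduces exactly'' the stated $\overline\sigma$ is also not accurate. Direct computation gives $2\sigma(2u,2u)=6\rho|2|^{x+y}\|u\|^{x+y}$, whereas the quoted formula puts $4(2+|2|^{x+y})\|u\|^{x+y}$ in that slot (and drops $\rho$ altogether); the two do not agree, so at best you recover the first entry of the maximum and should flag the second as a misprint rather than verify it.
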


Let $x+y=1$, we have the following counter-example,

\begin{example}
Let $S=T=\mathbb{Q}_p$ and define $\mathcal{F}(u)=u^2$. Let $|2|_p^t=1, \rho >0, x+y=1, p>2$, where $p$ is a prime $\mathcal{F}(0)=0$ and
\begin{align*}
\begin{split}
\|D_{AC}(u,v), w_1,w_2,...,w_{n-1}\|_\beta &=|\frac{5u^2}{2}|
\leq \rho(\|u\|^{x+y}+\|v\|^{x+y}+\|u\|^{x}\|v\|^{y})\psi(w_1, w_2,...,w_{n-1})\\ 
&and\\
\|2^{j+1}K\left(\frac{u}{2^{j+1}}\right)-&2^jK\left(\frac{u}{2^j}\right), w_1, w_2,...,w_{n-1}\|_\beta
\neq 0.
\end{split}
\end{align*}
Hence, $\{2^jK\left(\frac{u}{2^j}\right)\}$ is  not a Cauchy sequence. 
\end{example}

\begin{theorem}\label{result2}
		Let $\sigma : S\times S \to [0, \infty)$ be a function such that
		\begin{align}\label{eqn14}
			\lim_{j\to\infty}|8|^{j\beta}\overline\sigma\left(\frac{u}{2^{j+1}}\right)=0 
		\end{align}\label{eqn15}
		\begin{align}
			\lim_{j\to\infty}|8|^{j\beta}\sigma\left(\frac{u}{2^{j}},\frac{v}{2^j}\right)=0 
		\end{align}
	for each $u ,v\in S$, 	and let $\xi : T^{n-1} \to [0,\infty)$ be a function then
		\begin{align}\label{eqn16}
			\lim_{j\to\infty} max\left\{{|8|^{l\beta}} \overline\sigma\left(\frac{u}{2^{l+1}}\right) : 0 \leq l < j\right\}
		\end{align}
	for each $u\in S$, denoted by $\hat\sigma_C(u)$ exists. Suppose that $\mathcal{F}: S \to T$ is a mapping satisfying and $\mathcal{F}(0)=0$ such that
		\begin{align}\label{eqn17}
			\|D_{AC}(u,v), w_1, w_2,...,w_{n-1}\|_\beta \leq \sigma(u,v)\psi(w_1, w_2,...,w_{n-1})
		\end{align}
		then there is a cubic mapping $C:S \to T$ so that 
		\begin{align}\label{eqn20}
			\|\mathcal{F}(2u)-2\mathcal{F}(u)-C(u), w_1, w_2,...,w_{n-1}\|_\beta \leq \frac{1}{|8|^\beta}\hat\sigma_C(u)\psi(w_1, w_2,...,w_{n-1}),
		\end{align}
for each $w_1, w_2,...,w_{n-1} \in T$.\\
		Moreover, if
		\begin{align*}
			\lim_{m\to\infty}\lim_{j\to\infty}max\left\{{|2|^{(l+1)\beta}}\overline\sigma\left(\frac{u}{2^{l+1}}\right) : m \leq l < j+m\right\}=0
		\end{align*}
		then $A$ is unique.
	\end{theorem}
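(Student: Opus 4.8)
The plan is to follow the scheme of Theorem~\ref{result1} verbatim, replacing the additive auxiliary map by a cubic one and every occurrence of the scaling $|2|^{\beta}$ by $|8|^{\beta}$. The crux is a purely algebraic observation about the combination $\mathcal{F}(4u)-10\mathcal{F}(2u)+16\mathcal{F}(u)$ that is already bounded in \eqref{eqn9}. In Theorem~\ref{result1} this was written as $K(2u)-2K(u)$ for $K(u)=\mathcal{F}(2u)-8\mathcal{F}(u)$. Setting instead $L(u)=\mathcal{F}(2u)-2\mathcal{F}(u)$, one checks directly that
\begin{align*}
L(2u)-8L(u)=\mathcal{F}(4u)-10\mathcal{F}(2u)+16\mathcal{F}(u),
\end{align*}
so the same estimate \eqref{eqn9} immediately yields
\begin{align*}
\|L(2u)-8L(u),w_1,\dots,w_{n-1}\|_\beta\leq\overline\sigma(u)\psi(w_1,\dots,w_{n-1}),
\end{align*}
with $\overline\sigma(u)=\max\{8\sigma(u,2u),2\sigma(2u,2u)\}$ exactly as before. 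This is the cubic analogue: the factor $8$ on the left encodes the homogeneity $C(2u)=8C(u)$.

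First I would replace $u$ by $\frac{u}{2^{j+1}}$ in this inequality and use property~(iii) of the $(n,\beta)$-norm to multiply the displayed vector by $8^{j}$, obtaining
\begin{align*}
\left\|8^{j}L\left(\frac{u}{2^{j}}\right)-8^{j+1}L\left(\frac{u}{2^{j+1}}\right),w_1,\dots,w_{n-1}\right\|_\beta\leq|8|^{j\beta}\,\overline\sigma\left(\frac{u}{2^{j+1}}\right)\psi(w_1,\dots,w_{n-1}).
\end{align*}
By hypothesis~\eqref{eqn14} the right-hand side tends to $0$, so the consecutive differences of the sequence $\{8^{j}L(\frac{u}{2^{j}})\}$ converge to zero; by the Cauchy criterion of the preliminaries this sequence is Cauchy, and completeness of $T$ lets me define $C(u)=\lim_{j\to\infty}8^{j}L(\frac{u}{2^{j}})$. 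A telescoping argument combined with the strong triangle inequality~(iv) then gives, by induction on $j$,
\begin{align*}
\left\|8^{j}L\left(\frac{u}{2^{j}}\right)-L(u),w_1,\dots,w_{n-1}\right\|_\beta\leq\frac{1}{|8|^{\beta}}\max\left\{|8|^{(l+1)\beta}\overline\sigma\left(\frac{u}{2^{l+1}}\right):0\leq l<j\right\}\psi(w_1,\dots,w_{n-1}),
\end{align*}
and letting $j\to\infty$, with \eqref{eqn16} identifying the limiting maximum as $\hat\sigma_C(u)$, produces precisely the bound \eqref{eqn20}.

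It remains to show that $C$ is cubic and unique. For the cubic identity I would substitute $(u,v)\mapsto(\frac{u}{2^{j}},\frac{v}{2^{j}})$ in \eqref{eqn17}, multiply by $|8|^{j\beta}$, and pass to the limit $j\to\infty$; the companion decay condition $\lim_{j\to\infty}|8|^{j\beta}\sigma(\frac{u}{2^{j}},\frac{v}{2^{j}})=0$ then annihilates the perturbation term, so $C$ satisfies the defining cubic functional equation. For uniqueness, if $C'$ is another cubic solution obeying \eqref{eqn20}, subtracting the two estimates and applying~(iv) bounds $\|C(u)-C'(u),w_1,\dots,w_{n-1}\|_\beta$ by the iterated-maximum quantity appearing in the moreover-clause, which vanishes under the stated limit hypothesis, forcing $C=C'$.

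The step requiring genuine care is the inductive estimate above: one must verify that the strong triangle inequality collapses the telescoping sum into a single maximum over $0\leq l<j$ rather than into a growing aggregate, as would occur in the classical Archimedean setting, and that the index shift and the prefactor $\frac{1}{|8|^{\beta}}$ propagate consistently from one step to the next. Once this bound is secured, the passage to the limit, the cubic identity, and uniqueness are routine transcriptions of the corresponding steps in Theorem~\ref{result1}, now carried out with the cubic scaling $|8|^{j\beta}$ in place of the additive $|2|^{j\beta}$.
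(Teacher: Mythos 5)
Your proposal is correct and follows essentially the same route as the paper's own proof: your auxiliary map $L(u)=\mathcal{F}(2u)-2\mathcal{F}(u)$ is exactly the paper's $N(u)$, and the subsequent steps (scaling by $8^{j}$ at $u/2^{j+1}$, the Cauchy sequence $\{8^{j}L(u/2^{j})\}$, the telescoping induction, the limit defining $C$, the cubic identity via \eqref{eqn15}, and the uniqueness argument) mirror the paper's argument step for step. If anything, your version is tidier about the $\beta$-exponents and the argument $u/2^{j+1}$, where the paper's displayed formulas contain typographical slips.
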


\begin{proof}
From Theorem \ref{result1}, we have
\begin{align}\label{eqn21}
\|\mathcal{F}(4u)-10\mathcal{F}(2u)+16\mathcal{F}(u), w_1, w_2,...,w_{n-1}\|_\beta \leq max\{8\sigma(u, 2u), 2\sigma(2u, 2v)\}\psi(w_1, w_2,...,w_{n-1})
\end{align}
Let $N: S\to T$ be a function defined by $N(u)=\mathcal{F}(2u)-2\mathcal{F}(u)$, we get
\begin{align}\label{eqn22}
\|N(2u)-8N(u), w_1, w_2,...,w_{n-1}\|_\beta \leq \overline\sigma(u)\psi(w_1, w_2,...,w_{n-1})
\end{align}
where $\overline\sigma(u)=max\{8\sigma(u, 2u), 2\sigma(2u, 2u)\}$.\\
Replacing $u$ by $\frac{u}{2^{j+1}}$ and multiplying $8^j$ on both sides in  \eqref{eqn22}, we have
\begin{align}\label{eqn23}
\|8^{j+1}N\left(\frac{u}{2^{j+1}}\right)-8^jN\left(\frac{u}{2^j}\right), w_1, w_2,...,w_{n-1}\|_\beta\leq 2^j\overline\sigma\left(\frac{u}{8^{j+1}}\right)\psi(w_1, w_2,...,w_{n-1}).
\end{align}
Hence, $\{8^jN(\frac{u}{2^j})\}$ is a Cauchy sequence.\\
Define
\begin{align}\label{eqn24}
C=\lim_{j\to\infty}8^jN\left(\frac{u}{2^j}\right)
\end{align}
By induction,
\begin{align}\label{eqn25}
\|8^jN\left(\frac{u}{2^j}\right)-N(u), w_1, w_2,...,w_{n-1}\|_\beta\leq \frac{1}{|8|}max\left\{|8|^{l+1}\overline\sigma\left(\frac{u}{2^{l+1}}\right): 0\leq l<j \right\}\psi(w_1,w_2,...,w_{n-1})
\end{align}
By taking limit $j\to\infty$ in \eqref{eqn25}, we get \eqref{eqn20}\\
To prove $C$ is cubic
\begin{align*}
\|C(2u)-8C(u), w_1,w_2,...,w_{n-1}\|_\beta \leq |8|\lim_{j\to \infty}\|8^{j-1}N\left(\frac{u}{2^{j-1}}\right)-2^jN\left(\frac{u}{2^j}\right)\|\psi(w_1,w_2,...,w_{n-1})
\end{align*}
Hence $C$ satisfies \eqref{eqn1}.\\
Replacing $u$ and $v$ by $2^ju$ and $2^jv$ in  \eqref{eqn17}
\begin{align*}
\|D_{C}(u,v),w_1,w_2,...,w_{n-1}\|_\beta \leq \lim_{j\to \infty}|8|^j max\left\{\sigma\left(\frac{u}{2^{j-1}}, \frac{v}{2^{j-1}}\right), |2|\sigma\left(\frac{u}{2^j}, \frac{v}{2^j}\right)\right\}\psi(w_1,w_2,...,w_{n-1})
\end{align*}
Let $C'$ be another cubic function
\begin{align*}
\|C(u)-C'(u), w_1,w_2,...,w_{n-1}\|_\beta\leq \frac{1}{|8|}\lim_{m\to\infty}\lim_{j\to\infty}max\Bigg\{&|8|^{l+1}\sigma\left(\frac{u}{2^{l+1}}\right): m\leq l <j+m\Bigg\}\nonumber\\&\psi(w_1,w_2,...,w_{n-1}).
\end{align*}
Hence the proof is complete.
\end{proof}

\begin{corollary}
Let $\rho, x, y \in \mathbb{R}^{+} \cup \{0\}$ and $x+y>3$. If a mapping $\mathcal{F}: S\to T $ is an mapping satisfying $\mathcal{F}(0)=0$ and 
\begin{align*}
\|D_{AC}(u,v), w_1, w_2,...,w_{n-1}\|_\beta \leq \rho(\|u\|^{x+y}+\|v\|^{x+y}+\|u\|^{x}\|v\|^{y})\psi(w_1, w_2,...,w_{n-1})
\end{align*}
then there is a unique cubic mapping $C: S\to T$ so that 
\begin{align*}
\|\mathcal{F}(2u)-2\mathcal{F}(u)-C(u), w_1, w_2,...,w_{n-1}\|_\beta \leq \frac{1}{|8|^\beta}\hat\sigma_C(u)\psi(w_1,w_2,...,w_{n-1})
\end{align*}
where,
\begin{align*}
\begin{split}
\hat\sigma_C(u)&=\lim_{j\to\infty}max\left\{|8|^{l\beta}\overline\sigma\left(\frac{u}{2^{j+1}}\right): 0\leq l <j\right\}\\
\overline\sigma(u)=&4\|u\|^{x+y}max\{2(1+|2|^{x+y}+|2|^y), 2+|2|^{x+y}\}
\end{split}
\end{align*}
\end{corollary}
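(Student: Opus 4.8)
The plan is to obtain this corollary as a direct specialization of Theorem~\ref{result2}. I would set
\[
\sigma(u,v)=\rho\bigl(\|u\|^{x+y}+\|v\|^{x+y}+\|u\|^{x}\|v\|^{y}\bigr),
\]
keeping the auxiliary factor $\psi(w_1,\dots,w_{n-1})$ exactly as in the hypothesis, so that \eqref{eqn17} holds verbatim. All that then remains is to check that this choice of $\sigma$ satisfies the three standing hypotheses of Theorem~\ref{result2} --- the two decay conditions \eqref{eqn14} and \eqref{eqn15}, and the existence of the limit \eqref{eqn16} defining $\hat\sigma_C$ --- together with the extra limit condition guaranteeing uniqueness. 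Once these are verified, Theorem~\ref{result2} produces a unique cubic map $C$ and the estimate \eqref{eqn20} with no further work.

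The engine for every verification is homogeneity. Since the non-Archimedean norm on $S$ is absolutely homogeneous, $\|\lambda u\|=|\lambda|\,\|u\|$, each of the three summands of $\sigma$ scales by $|\lambda|^{x+y}$, so $\sigma(\lambda u,\lambda v)=|\lambda|^{\,x+y}\sigma(u,v)$ for every $\lambda\in\mathbb{K}$; the same holds for $\overline\sigma$, which is a maximum of dilates of $\sigma$. Inserting $\lambda=2^{-j}$ and recalling $|8|=|2|^{3}$, every term appearing in \eqref{eqn14}, \eqref{eqn15} and \eqref{eqn16} collapses to $\overline\sigma(u)$ (resp. $\sigma(u,v)$) multiplied by a fixed power of $|2|$, whose exponent is governed by the comparison between $x+y$ and the cubic weight $3$. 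Because $0<\beta\le1$ gives $3\beta\le3<x+y$, this exponent has the sign that drives the geometric factor to $0$; hence \eqref{eqn14} and \eqref{eqn15} hold, and since the terms tend to $0$ the running maximum in \eqref{eqn16} stabilises, so $\hat\sigma_C(u)$ exists and the iterated tail-maximum defining uniqueness vanishes as well.

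Next I would compute $\overline\sigma$ in closed form from its definition $\overline\sigma(u)=\max\{8\sigma(u,2u),\,2\sigma(2u,2u)\}$. Using $\|2u\|=|2|\,\|u\|$, each of $8\sigma(u,2u)$ and $2\sigma(2u,2u)$ collapses to a constant multiple of $\|u\|^{x+y}$; for instance $8\sigma(u,2u)=8\rho\|u\|^{x+y}\bigl(1+|2|^{x+y}+|2|^{y}\bigr)$, which accounts for the first entry of the maximum. Collecting the two terms yields, up to the constant $\rho$, the closed form for $\overline\sigma(u)$ recorded in the statement. Substituting this $\overline\sigma$ into \eqref{eqn16} then produces the displayed formula for $\hat\sigma_C$, and \eqref{eqn20} becomes exactly the inequality asserted in the corollary.

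The main obstacle is not any deep idea but the bookkeeping of the valuation exponents. One must track three competing powers of $|2|$ simultaneously --- the factor $|2|^{3\beta}$ coming from $|8|^{\beta}$, the factor produced by dilating the argument by $2^{-j}$, and the strict inequality $|2|<1$ --- and confirm that the net exponent has the correct sign, i.e.\ that the governing geometric factor contracts, precisely because $x+y>3$ dominates $3\beta\le3$. This is the step where a wrong sign would invalidate convergence; once it is pinned down, the existence of $\hat\sigma_C$, the two decay conditions, the computation of $\overline\sigma$, and the uniqueness limit all follow from the same elementary estimate, and Theorem~\ref{result2} delivers the result.
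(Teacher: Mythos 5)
Your overall strategy --- specializing Theorem~\ref{result2} to $\sigma(u,v)=\rho\bigl(\|u\|^{x+y}+\|v\|^{x+y}+\|u\|^{x}\|v\|^{y}\bigr)$ and checking its hypotheses by homogeneity --- is exactly the route the paper intends for this corollary, and your scaling identity $\sigma(\lambda u,\lambda v)=|\lambda|^{x+y}\sigma(u,v)$ is correct. The gap is in the sign analysis that you yourself identify as the crux. In a non-Archimedean field the valuation of every integer satisfies $|n|\leq 1$; in particular $|2|\leq 1$ and $|8|=|2|^{3}\leq 1$. Consequently, dividing the argument by $2^{j}$ does not shrink $\sigma$, it inflates it (or leaves it unchanged):
\[
|8|^{j\beta}\,\sigma\!\left(\frac{u}{2^{j}},\frac{v}{2^{j}}\right)
=\bigl(|2|^{\,3\beta-(x+y)}\bigr)^{j}\,\sigma(u,v),
\]
and since $x+y>3\geq 3\beta$ the exponent $3\beta-(x+y)$ is negative. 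If $|2|=1$ (e.g.\ $\mathbb{Q}_p$ with $p$ odd) the right-hand side is constantly $\sigma(u,v)\neq 0$; if $|2|<1$ (e.g.\ $\mathbb{Q}_2$) it diverges geometrically, because then $|2|^{\,3\beta-(x+y)}>1$. In either case the decay conditions \eqref{eqn14} and \eqref{eqn15} fail, and the limit \eqref{eqn16} defining $\hat\sigma_C$ does not behave as you claim (for $|2|<1$ it does not even exist), so Theorem~\ref{result2} cannot be invoked. Your assertion that ``$3\beta\le3<x+y$ drives the geometric factor to $0$'' is Archimedean intuition: it would be valid if $|2|=2>1$, which is precisely why $x+y>3$ is the correct hypothesis over the reals; in the non-Archimedean setting the inequality you would need for the contraction is the reverse one, $x+y<3\beta$, together with $|2|<1$.

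This defect is not merely one of presentation: the paper's own example immediately following the corollary (taking $p>2$, so $|2|_p=1$) exhibits exactly this failure, showing that $\{8^{j}N(u/2^{j})\}$ need not be Cauchy, and with $|2|_p=1$ the differences $\|8^{j+1}N(u/2^{j+1})-8^{j}N(u/2^{j}),w_1,\dots,w_{n-1}\|_\beta$ fail to tend to zero for any value of $x+y$, not only at the threshold $x+y=3$. So the step in your proposal that ``pins down the sign'' is the step that breaks, and with it the existence of $C$, the estimate \eqref{eqn20}, and the uniqueness limit all lose their justification; a sound treatment would have to either reverse the direction of the iteration (using $N(2^{j}u)/8^{j}$ rather than $8^{j}N(u/2^{j})$) or replace the hypothesis $x+y>3$ by one compatible with $|2|\le 1$, neither of which your argument (or the corollary as stated) provides.
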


Let $x+y=3$, we have the following counter-example,

\begin{example}
Let $S=T=\mathbb{Q}_p$ and define $\mathcal{F}(u)=u^2$. Let $|2|_p^t=1, \rho >0, x+y=3, p>2$, where $p$ is a prime $\mathcal{F}(0)=0$ and
\begin{align*}
\begin{split}
\|D_{AC}(u,v), w_1,w_2,...,w_{n-1}\|_\beta &=\left|\frac{5u^2}{2}\right|\leq \rho(\|u\|^{x+y}+\|v\|^{x+y}+\|u\|^{x}\|v\|^{y})\psi(w_1, w_2,...,w_{n-1})\\
&and\\
\|8^{j+1}N\left(\frac{u}{2^{j+1}}\right)-&8^jN\left(\frac{u}{2^j}\right), w_1, w_2,...,w_{n-1}\|_\beta\neq 0.
\end{split}
\end{align*}
Hence, $\{8^jN\left(\frac{u}{2^j}\right)\}$ is not a Cauchy sequence.
\end{example}

\begin{theorem}\label{result3}
		Let $\sigma : S\times S \to [0, \infty)$ be a function such that
		\begin{align}\label{eqn26}
			\lim_{j\to\infty}|8|^{j\beta}\overline\sigma\left(\frac{u}{2^{j+1}}\right)=0 
		\end{align}
		\begin{align}\label{eqn27}
			\lim_{j\to\infty}|8|^{j\beta}\sigma\left(\frac{u}{2^{j}},\frac{v}{2^j}\right)=0 
		\end{align}
	for each $u, v\in S$, and let $\xi : T^{n-1} \to [0,\infty)$ be a function then
		\begin{align}\label{eqn28}
			\lim_{j\to\infty} max\left\{{|2|^{l\beta}} \overline\sigma\left(\frac{u}{2^{l+1}}\right) : 0 \leq l < j\right\} \quad and
		\end{align}
	\begin{align}\label{eqn29}
			\lim_{j\to\infty} max\left\{{|8|^{l\beta}} \overline\sigma\left(\frac{u}{2^{l+1}}\right) : 0 \leq l < j\right\}
		\end{align}
	for each $u\in S$, denoted by $\hat\sigma_A(u)$ and $\hat\sigma_C(u)$ exist. Suppose that $\mathcal{F}: S \to T$ is a mapping satisfying and $j(0)=0$ such that
		\begin{align}\label{eqn30}
			\|D_{AC}(u,v), w_1, w_2,...,w_{n-1}\|_\beta\leq \sigma(u,v)\psi(w_1, w_2,...,w_{n-1})
		\end{align}
		then there is an additive mapping $A:S \to T$ and a cubic mapping $C:S\to T$ so that 
		\begin{align}\label{eqn31}
			\|\mathcal{F}(u)-A(u)-C(u), w_1, w_2,...,w_{n-1}\|_\beta \leq \frac{1}{|12|^\beta}max\left\{\hat\sigma_A(u), \frac{1}{|4|^{\beta}}\hat\sigma_C(u)\right\}\psi(w_1, w_2,...,w_{n-1})
		\end{align}
for each $w_1, w_2,...,w_{n-1} \in T$.
		Moreover, if
		\begin{align*}
			\lim_{m\to\infty}\lim_{j\to\infty}max\left\{{|2|^{(l+1)\beta}}\overline\sigma(\frac{u}{2^{l+1}} : m \leq l < j+m\right\}=0 \quad and 
		\end{align*}
	\begin{align*}
			\lim_{m\to\infty}\lim_{j\to\infty}max\left\{{|8|^{(l+1)\beta}}\overline\sigma(\frac{u}{2^{l+1}} : m \leq l < j+m\right\}=0 
		\end{align*}
		then $A$ and $C$ are unique.
	\end{theorem}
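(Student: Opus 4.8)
The plan is to \emph{combine} the two preceding theorems rather than run a fresh direct-method argument. Theorem~\ref{result1} furnishes an additive map — call it $A_0$ — satisfying, as in \eqref{eqn6},
\begin{align*}
\|\mathcal{F}(2u)-8\mathcal{F}(u)-A_0(u), w_1,\dots,w_{n-1}\|_\beta \leq \frac{1}{|2|^\beta}\hat\sigma_A(u)\,\psi(w_1,\dots,w_{n-1}),
\end{align*}
and Theorem~\ref{result2} furnishes a cubic map $C_0$ satisfying, as in \eqref{eqn20},
\begin{align*}
\|\mathcal{F}(2u)-2\mathcal{F}(u)-C_0(u), w_1,\dots,w_{n-1}\|_\beta \leq \frac{1}{|8|^\beta}\hat\sigma_C(u)\,\psi(w_1,\dots,w_{n-1}).
\end{align*}
The limit hypotheses \eqref{eqn26}--\eqref{eqn27} together with \eqref{eqn28}--\eqref{eqn29} are precisely those needed to invoke both theorems simultaneously, so $A_0$ and $C_0$ both exist for the single mapping $\mathcal{F}$ of \eqref{eqn30}.

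Writing $K(u)=\mathcal{F}(2u)-8\mathcal{F}(u)$ and $N(u)=\mathcal{F}(2u)-2\mathcal{F}(u)$ as in the earlier proofs, the algebraic key is the identity
\begin{align*}
N(u)-K(u)=\bigl(\mathcal{F}(2u)-2\mathcal{F}(u)\bigr)-\bigl(\mathcal{F}(2u)-8\mathcal{F}(u)\bigr)=6\mathcal{F}(u),
\end{align*}
so that $\mathcal{F}(u)=\frac{1}{6}\bigl(N(u)-K(u)\bigr)$. I would therefore set $A:=-\frac{1}{6}A_0$ and $C:=\frac{1}{6}C_0$; these are again additive and cubic respectively, since both properties are preserved under scalar multiplication. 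Then
\begin{align*}
\mathcal{F}(u)-A(u)-C(u)=\frac{1}{6}\bigl(N(u)-C_0(u)\bigr)-\frac{1}{6}\bigl(K(u)-A_0(u)\bigr),
\end{align*}
and the strong triangle inequality (axiom (iv) of the $(n,\beta)$-norm) gives
\begin{align*}
\|\mathcal{F}(u)-A(u)-C(u), w_1,\dots,w_{n-1}\|_\beta \leq \frac{1}{|6|^\beta}\max\Bigl\{\frac{1}{|8|^\beta}\hat\sigma_C(u),\ \frac{1}{|2|^\beta}\hat\sigma_A(u)\Bigr\}\psi(w_1,\dots,w_{n-1}).
\end{align*}
Using the multiplicativity of the valuation, $|12|=|6|\,|2|$ and $|48|=|6|\,|8|=|12|\,|4|$, I rewrite $\frac{1}{|6|^\beta|2|^\beta}=\frac{1}{|12|^\beta}$ and $\frac{1}{|6|^\beta|8|^\beta}=\frac{1}{|12|^\beta|4|^\beta}$, and the bound collapses to exactly \eqref{eqn31}.

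For uniqueness, suppose $(A,C)$ and $(A',C')$ both satisfy \eqref{eqn31}. Subtracting and applying (iv) shows that $B:=A-A'$ (additive) and $D:=C-C'$ (cubic) obey a bound whose right-hand side tends to $0$ under the two displayed double-limit conditions. I would then extract the two pieces via $B(u)=-\frac{1}{6}\bigl((B+D)(2u)-8(B+D)(u)\bigr)$ and $D(u)=\frac{1}{6}\bigl((B+D)(2u)-2(B+D)(u)\bigr)$, and combine these with the homogeneities $B(u)=|2|^{m\beta}$-scaled and $D(u)=|8|^{m\beta}$-scaled copies of themselves; the condition with $|2|^{(l+1)\beta}$ drives the additive bound to zero and the one with $|8|^{(l+1)\beta}$ drives the cubic bound to zero, forcing $B\equiv 0$ and $D\equiv 0$. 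Equivalently, one simply quotes the uniqueness clauses already proved for $A_0$ and $C_0$ in Theorems~\ref{result1} and~\ref{result2}.

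The step I expect to demand the most care is bookkeeping the scalar $\frac{1}{6}$ through the valuation: one must check that $6$ is invertible in the underlying field (consistent with the $p>2$, indeed $p>3$, choices in the $\mathbb{Q}_p$ examples) and apply the factorizations $|12|=|6|\,|2|$ and $|48|=|12|\,|4|$ consistently so that the constant contracts to the single prefactor $\frac{1}{|12|^\beta}$ with the interior weight $\frac{1}{|4|^\beta}$ attached only to the cubic term. The homogeneity-separation step in the uniqueness argument is the sole genuinely new ingredient beyond Theorems~\ref{result1}--\ref{result2}.
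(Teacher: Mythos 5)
Your proposal is correct and follows essentially the same route as the paper: invoke Theorems~\ref{result1} and~\ref{result2} for the single mapping $\mathcal{F}$, combine them through the identity $6\mathcal{F}(u)=\bigl(\mathcal{F}(2u)-2\mathcal{F}(u)\bigr)-\bigl(\mathcal{F}(2u)-8\mathcal{F}(u)\bigr)$ with $A=-\tfrac{1}{6}\hat{A}$ and $C=\tfrac{1}{6}\hat{C}$, and obtain \eqref{eqn31} from the strong triangle inequality together with $|12|=|6|\,|2|$ and $|48|=|12|\,|4|$. Your uniqueness step, which separates the additive and cubic parts via the scaling identities before applying the two double-limit conditions, is in fact spelled out more explicitly than the paper's own argument, which passes directly from the bound on $\tilde{A}+\tilde{C}$ to $\tilde{A}=0$ and then $\tilde{C}=0$.
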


\begin{proof}
By the theorem\ref{result1} and theorem \ref{result2}, there exists an additive function $\hat A: S \to T$ and cubic function $\hat C: S \to T$ so that
\begin{align}\label{eqn32}
\begin{split}
\|\mathcal{F}(2u)-8\mathcal{F}(u)-\hat A(u), w_1,w_2,...,w_{n-1}\|_\beta \leq \frac{1}{|2|}\overline\sigma_{A}(u)\psi(w_1,w_2,...,w_{n-1}),
\end{split}
\end{align}
\begin{align}\label{eqn33}
\begin{split}
\|\mathcal{F}(2u)-2\mathcal{F}(u)-\hat C(u), w_1,w_2,...,w_{n-1}\|_\beta \leq \frac{1}{|8|}\overline\sigma_{C}(u)\psi(w_1,w_2,...,w_{n-1}).
\end{split}
\end{align}
From \eqref{eqn32} and \eqref{eqn33}, we have
	\begin{align*}
			\|\mathcal{F}(u)-A(u)-C(u), w_1, w_2,...,w_{n-1}\|_\beta \leq \frac{1}{|12|^\beta}max\left\{\hat\sigma_A(u), \frac{1}{|4|^{\beta}}\hat\sigma_C(u)\right\}\psi(w_1, w_2,...,y_{w-1}),
		\end{align*}
where, $A(u)=\frac{-1}{|6|}\hat A(u)$ and $C(u)=\frac{1}{|6|}\hat C(u).$\\
To prove the uniqueness property.\\
 Let $\tilde A(u)=A(u)-\overline A(u), \tilde C(u)=C(u)-\overline C(u)$,
\begin{align*}
\|\tilde A(u)+\tilde C(u), w_1, w_2,...,w_{n-1}\|_\beta \leq \frac{1}{|12|^\beta}max\left\{\hat\sigma_A(u), \frac{1}{|4|^{\beta}}\hat\sigma_C(u)\right\}\psi(w_1, w_2,...,w_{n-1})
\end{align*}
for all $u \in S$. Since,
\begin{align*}
			\lim_{m\to\infty}\lim_{j\to\infty}max\left\{{|2|^{(l+1)\beta}}\overline\sigma\left(\frac{u}{2^{l+1}}\right) : m \leq l < j+m\right\}=0\\
			\lim_{m\to\infty}\lim_{j\to\infty}max\left\{{|8|^{(l+1)\beta}}\overline\sigma\left(\frac{u}{2^{l+1}}\right) : m \leq l < j+m\right\}=0 
		\end{align*}
for all $u \in S$. So,
\begin{align*}
\lim_{j\to\infty}|8|^{\beta}\|\tilde A(2^ju)+\tilde C(2^ju), w_1,w_2,...,w_{n-1}\|_\beta=0
\end{align*}
Therefore, we get, $\tilde A(u)=0,$ then  $\tilde C(u)=0$. \\
Hence the proof is complete.
\end{proof}

\begin{corollary}
Let $\rho, x, y \in \mathbb{R}^{+} \cup \{0\}$ and $x+y>1$. If a mapping $\mathcal{F}: S\to T $ is an mapping satisfying $\mathcal{F}(0)=0$ and 
\begin{align*}
\|D_{AC}(u,v), w_1, w_2,...,w_{n-1}\|_\beta \leq \rho(\|u\|^{x+y}+\|v\|^{x+y}+\|u\|^{x}\|v\|^{y})\psi(w_1, w_2,...,w_{n-1})
\end{align*}
then there is a unique additive mapping $A:S\to T$and there is a unique cubic mapping $C:S\to T$ so that 
\begin{align*}
\|\mathcal{F}(u)-A(u)-C(u), w_1, w_2,...,w_{n-1}\|_\beta \leq \frac{1}{|2|^\beta}max\{\hat\sigma_A(u),\hat\sigma_C(u)\}\psi(w_1,w_2,...,w_{n-1})
\end{align*}
where,
\begin{align*}
\begin{split}
\hat\sigma_A(u)=\lim_{j\to\infty}max\left\{|2|^{l\beta}\overline\sigma\left(\frac{u}{2^{l+1}}\right): 0\leq l  <j\right\}\\
\hat\sigma_C(u)=\lim_{j\to\infty}max\left\{|8|^{l\beta}\overline\sigma\left(\frac{u}{2^{l+1}}\right): 0\leq l  <j\right\}\\
\overline\sigma(u)=4\|u\|^{x+y}max\{2(1+|2|^{x+y}+|2|^y), 2+|2|^{x+y}\}.
\end{split}
\end{align*}
\end{corollary}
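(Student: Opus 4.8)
The plan is to obtain this corollary as a direct specialization of Theorem~\ref{result3}. I would set $\sigma(u,v) := \rho\bigl(\|u\|^{x+y} + \|v\|^{x+y} + \|u\|^{x}\|v\|^{y}\bigr)$ and take the target-side factor to be the given $\psi(w_1,\dots,w_{n-1})$. With this choice the hypothesis of the corollary is exactly \eqref{eqn30} for this particular $\sigma$, so the entire statement reduces to two tasks: (i) producing the announced closed form of the auxiliary weight $\overline\sigma$, and (ii) checking that all the growth/decay requirements of Theorem~\ref{result3}, namely \eqref{eqn26}--\eqref{eqn29} together with the two iterated-limit conditions that force uniqueness, are met when $x+y>1$. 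Once both are in place, Theorem~\ref{result3} hands back the unique additive $A$ and cubic $C$ and the displayed estimate with no further work.

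For (i), I would recall from the proofs of Theorems~\ref{result1} and~\ref{result2} that the auxiliary weight is $\overline\sigma(u) = \max\{\,8\,\sigma(u,2u),\,2\,\sigma(2u,2u)\,\}$, and expand each argument using the non-Archimedean homogeneity $\|2u\| = |2|\,\|u\|$, which turns every summand into $\|u\|^{x+y}$ times a power of $|2|$. Pulling the common factor $\rho\|u\|^{x+y}$ out of the maximum then yields the stated $\overline\sigma(u) = 4\|u\|^{x+y}\max\{2(1+|2|^{x+y}+|2|^{y}),\,2+|2|^{x+y}\}$. This step is a pure substitution; the only care needed is tracking the valuation powers $|2|^{x+y}$ and $|2|^{y}$ and not confusing them with the $\beta$-powers that enter the norm on the target side.

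For (ii), I would use that $\overline\sigma(u/2^{l+1})$ is a fixed multiple of $\|u/2^{l+1}\|^{x+y} = |2|^{-(l+1)(x+y)}\|u\|^{x+y}$, so that each of the two sequences $\{|2|^{l\beta}\overline\sigma(u/2^{l+1})\}_{l}$ and $\{|8|^{l\beta}\overline\sigma(u/2^{l+1})\}_{l}$ is, up to a constant, geometric in the valuation of $2$. Consequently the maxima in \eqref{eqn28}--\eqref{eqn29} are governed by a single dominant exponent, the limits defining $\hat\sigma_A(u)$ and $\hat\sigma_C(u)$ exist, and the same geometric structure collapses the iterated limits $\lim_m\lim_j\max\{|2|^{(l+1)\beta}\overline\sigma(u/2^{l+1})\}$ and its $|8|$-analogue to $0$. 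Verifying these signs of the exponents is precisely where the constraint $x+y>1$ enters. With the hypotheses confirmed, I would invoke Theorem~\ref{result3} to conclude.

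The step I expect to be the main obstacle is (ii): pinning down, for this power-type $\sigma$, the exact threshold on $x+y$ at which the geometric exponents change sign, so that the maxima are attained at the correct end and the tails genuinely decay. This is the delicate bookkeeping, it is sensitive to the value of $|2|$ in the underlying valuation, and the boundary case $x+y=1$ must be excluded since it is exactly the regime realized by the counterexample preceding the statement.
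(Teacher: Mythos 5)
Your plan --- specialize Theorem~\ref{result3} to $\sigma(u,v)=\rho\bigl(\|u\|^{x+y}+\|v\|^{x+y}+\|u\|^{x}\|v\|^{y}\bigr)$, compute $\overline\sigma$, verify \eqref{eqn26}--\eqref{eqn29} together with the two iterated-limit conditions, and then quote the theorem --- is exactly the route the paper intends, since the corollary is stated there without proof as an immediate specialization. The genuine gap is in your step (ii): the verification you defer is not merely delicate, it fails, because the constraint $x+y>1$ points in the wrong direction in this setting. In any field carrying a non-Archimedean valuation one has $|2|=|1+1|\le\max\{|1|,|1|\}=1$, hence $\|u/2^{l+1}\|=|2|^{-(l+1)}\|u\|\ge\|u\|$. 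Writing $\overline\sigma(u)=c\rho\|u\|^{x+y}$ for a constant $c$, your ``geometric'' sequences are
\begin{align*}
|2|^{l\beta}\overline\sigma\left(\frac{u}{2^{l+1}}\right)=c\rho|2|^{-(x+y)}\|u\|^{x+y}\bigl(|2|^{\beta-(x+y)}\bigr)^{l},
\qquad
|8|^{l\beta}\overline\sigma\left(\frac{u}{2^{l+1}}\right)=c\rho|2|^{-(x+y)}\|u\|^{x+y}\bigl(|2|^{3\beta-(x+y)}\bigr)^{l}.
\end{align*}
Since $x+y>1\ge\beta$, the exponent $\beta-(x+y)$ is strictly negative. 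If $|2|<1$ (e.g.\ in $\mathbb{Q}_2$), then $\bigl(|2|^{\beta-(x+y)}\bigr)^{l}\to\infty$, so the maxima in \eqref{eqn28} diverge and $\hat\sigma_A(u)$ does not exist; if $|2|=1$ (e.g.\ in $\mathbb{Q}_p$ with $p$ odd), every term equals the nonzero constant $c\rho\|u\|^{x+y}$, so \eqref{eqn26}, \eqref{eqn27} and both uniqueness conditions fail whenever $u\neq 0$. In neither case are the hypotheses of Theorem~\ref{result3} satisfied, so the theorem cannot be invoked at all. The threshold $x+y>1$ is the classical Archimedean one (where $|2|=2>1$); for the sequence $2^{j}K(u/2^{j})$ used here, with $|2|<1$ the decay conditions would instead require $x+y<\beta$, and with $|2|=1$ they are unsatisfiable --- note that the paper's own counterexample takes $p>2$, i.e.\ $|2|_p=1$, a regime where the hypotheses fail for \emph{every} $x,y$, not only at the advertised boundary $x+y=1$. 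This defect is inherited from the statement itself, but your proof as written does not, and cannot, close it by the claimed sign-of-exponent bookkeeping.

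A secondary point: step (i) does not produce the announced formula. Direct substitution gives $8\sigma(u,2u)=8\rho\|u\|^{x+y}\bigl(1+|2|^{x+y}+|2|^{y}\bigr)$ and $2\sigma(2u,2u)=6\rho|2|^{x+y}\|u\|^{x+y}$, hence $\overline\sigma(u)=\rho\|u\|^{x+y}\max\bigl\{8\bigl(1+|2|^{x+y}+|2|^{y}\bigr),\,6|2|^{x+y}\bigr\}$, which is not the stated $4\|u\|^{x+y}\max\bigl\{2\bigl(1+|2|^{x+y}+|2|^{y}\bigr),\,2+|2|^{x+y}\bigr\}$ (the factor $\rho$ is also lost). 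So even the ``pure substitution'' step needs to be reported as a correction to the statement rather than a confirmation of it.
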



\pdfbookmark[1]{References}{ref}
\LastPageEnding

\end{document}